\tikzset{node distance=2cm, auto}
\newtheorem{theorem}{Theorem}[section]
\newtheorem{proposition}[theorem]{Proposition}
\newtheorem{definition}[theorem]{Definition}
\newtheorem{remark}[theorem]{Remark}
\title{A Logical Analysis of Universal Properties}
\author{Talal H. Alrawajfeh\\talalalrawajfeh@gmail.com}
\begin{document}
\maketitle

\begin{abstract}
    A Universal Mapping Property is generally described as a characterization of an object up to a unique isomorphism by considering its relation to every other object; however, the term "by considering its relation to every other object" is not clearly or explicitly defined. In this paper, we will introduce such definition which will also generalize the idea of a universal property from a logical perspective.
\end{abstract}

\keywords{Logic \and Category Theory \and Universal Property}

\section{Introduction}
In Category Theory, a structure of objects together with arrows is uniquely determined up to unique isomorphism through the relations these objects and arrows have with other objects and arrows via a Universal Mapping Property (UMP) (see \cite{awodey2010} and \cite{maclane1971}). An example of a UMP is the UMP of a product in a category:

In a category $\mathcal{C}$, the product of $A$ and $B$ consists of an object $A \times B$ and two arrows $\pi_A : A \times B \rightarrow A$ and $\pi_B : A \times B \rightarrow B$, usually called projections, such that given any diagram of the form:
\begin{center}
	\begin{tikzcd}
		A & X \arrow[r, "p_B"] \arrow[l, "p_A"'] & B
	\end{tikzcd}
\end{center}
there is a unique arrow $u : X \dashrightarrow A \times B$ that makes the following diagram commute.
\begin{center}
	\begin{tikzcd}[transform shape, nodes={scale=1.2}]
		  & X \arrow[ld, "p_A"'] \arrow[rd, "p_B"] \arrow[d, "u", dashed] &   \\
		A & A \times B \arrow[r, "\pi_B"] \arrow[l, "\pi_A"']                      & B
	\end{tikzcd}
\end{center}

A relation here between the structures 
\begin{tikzcd}
	A & A \times B \arrow[r, "\pi_B"] \arrow[l, "\pi_A"'] & B
\end{tikzcd}
and 
\begin{tikzcd}
	A & X \arrow[r, "p_B"] \arrow[l, "p_A"'] & B
\end{tikzcd}
is that the arrows $\pi_A$ and $p_A$ share the same codomains and the arrows $\pi_B$ and $p_B$ also share the same codomains. Another relation is the existence of a unique arrow $u$ from $X$ to $A \times B$ that satisfies $\pi_B \circ u = p_B$ and $\pi_A \circ u = p_A$. The existence of a unique arrow $u: X \dashrightarrow A \times B$ implies that $A \times B$ is unique up to unique isomorphism. To demonstrate this, assume that there is another object $P$ that is a product of $A$ and $B$ together with the arrows ${\pi'}_A: P \rightarrow A$ and  ${\pi'}_B: P\rightarrow B$, then there exists unique arrows $u_1: A \times B \rightarrow P$ and $u_2: P \rightarrow A \times B$ making the following diagram commute:

\begin{center}
	\begin{tikzcd}[transform shape, nodes={scale=1.2}]
		  & A \times B \arrow[ld, "\pi_A"'] \arrow[rd, "\pi_B"] \arrow[d, "u_1", dashed] &   \\
		A & P \arrow[r, "{\pi'}_B"] \arrow[l, "{\pi'}_A"']  \arrow[d, "u_2", dashed]   & B \\
		  & A \times B \arrow[lu, "\pi_A"] \arrow[ru, "\pi_B"']  &
	\end{tikzcd}
\end{center}

Also, since there is a unique arrow from $A \times B$ to itself by letting $X = A \times B$ and $p_A = \pi_A$ and $p_B = \pi_B$, and from the fact that there is an identity arrow $1_{A \times B}$ on $A \times B$, then $u_2 \circ u_1: A \times B \rightarrow A \times B$ is $1_{A \times B}$. Similarly, $u_1 \circ u_2: P \rightarrow P$ must be $1_P$. Hence, $u_1$ is the unique isomorphism from $A \times B$ to $P$.

In this paper, we will attempt to analyze and generalize universal properties from a logical point of view.

\section{Motivation}

We will start by the simplest possible way of defining what is meant by the characterization of an object uniquely through its relation to every other object. We will also start with the simplest meaning of "uniqueness" which is in terms of equality. To avoid paradoxes from statements involving terms like "all sets" or "all groups", the concept of a Class is utilized either informally under ZFC set theory or formally under BG set theory (see \cite{jech2002} and \cite{borceux1994}).

\begin{definition}\label{universal1}
We say that an object $u$ is $R$-universal if for a binary relation $R$ on a class $\mathcal{C}$, $u$ is the unique object that satisfies
\[
    \forall x : R(x, u)
\]

Equivalently,
\[
    \forall x : R(x, u) \wedge \forall v : \left[\forall x : R(x,v)\right] \implies v = u
\]
\end{definition}

As an example, consider the natural numbers $\mathbb{N}$ with the order relation $<$. Let $R(a,b) := a > b$, then $u = 1$ is $R$-universal since $1$ is the least element in $\mathbb{N}$ and if we assume that there is another $v \neq 1$ that is $R$-universal we arrive at a contradiction $v < 1$ so we must conclude that $v = 1$. 

It must be noted here that when we say an object $u$ is $R$-universal we don't necessarily mean that $u$ is an element of a set or a class but may be a set or a map or a structure (tuple) of sets and maps or in the case of Category Theory it could be a structure (tuple) of objects and arrows.

Although definition ~\ref{universal1} captures the idea of an object $u$ characterized by its relation to every other object, it is too restrictive because of the requirement that any object satisfying the same characterization must be identical to $u$.

To see how we can relax the condition of equality, consider, for example, in the context of propositional logic, the relation of logical equivalence ($\equiv$) for which we can use instead of equality:

\begin{equation}\label{universal-equivalence}
    \forall x : R(x, u) \wedge \forall v : \left[\forall x : R(x,v)\right] \implies v \equiv u
\end{equation}

Furthermore, we can replace logical equivalence with logical implication ($\implies$ or its dual $\impliedby$):

\begin{equation}\label{universal-implication}
    \forall x : R(x, u) \wedge \forall v : \left[\forall x : R(x,v)\right] \implies (v \implies u)
\end{equation}

To demonstrate that ~\ref{universal-implication} implies that any two objects $u_1$ and $u_2$ satisfying the same property are equivalent, observe that we can substitute $u_1$ for $v$ in $\forall x : R(x,v)$ when $u$ is $u_2$ which gives $u_1 \implies u_2$. Similarly, substituting $u_2$ for $v$ when $u$ is $u_1$, we obtain $u_2 \implies u_1$; hence, $u_1 \equiv u_2$. Therefore, ~\ref{universal-implication} and ~\ref{universal-equivalence} are equivalent.

In the context of Groups or Category Theory, we can replace equality with the existence of a unique isomorphism ($\cong$):

\begin{equation}\label{universal-iso}
    \forall x : R(x, u) \wedge \forall v : \left[\forall x : R(x,v)\right] \implies v \cong u
\end{equation}

Moreover, we can replace $\cong$ with a unique homomorphism/arrow ($\dashrightarrow$ or its dual $\dashleftarrow$):

\begin{equation}\label{universal-unique-arrow}
    \forall x : R(x, u) \wedge \forall v : \left[\forall x : R(x,v)\right] \implies \exists ! m : v \dashrightarrow u
\end{equation}

To show that ~\ref{universal-unique-arrow} implies that any two objects $u_1$ and $u_2$ satisfying the same property are isomorphic $u_1 \cong u_2$ and that the isomorphism is unique, we can carry out a similar argument as we did previously with ~\ref{universal-implication} to obtain the existence of unique homomorphisms/arrows $f: u_1 \dashrightarrow u_2$ and $g: u_2 \dashrightarrow u_1$. Also, substituting $u_1$ for $v$ in $\forall x : R(x,v)$ when $u$ is $u_1$ implies that there is a unique homomorphism/arrow $u_1 \dashrightarrow u_1$ which must be the identity $1_{u_1}$; hence, $g\circ f = 1_{u_1}$. Similarly, the homomorphism/arrow $u_2 \dashrightarrow u_2$ must be $1_{u_2}$; hence, $f\circ g = 1_{u_2}$, and thus $f$ is the unique isomorphism from $u_1$ to $u_2$. Therfore, ~\ref{universal-unique-arrow} and ~\ref{universal-iso} are equivalent.

In the next section, we introduce a generalization of definition ~\ref{universal1} that best captures the ideas we discussed in ~\ref{universal-equivalence}, ~\ref{universal-implication}, ~\ref{universal-iso}, and ~\ref{universal-unique-arrow}.

\section{A Generalization}

A preorder is a reflexive transitive binary relation usually denoted $\preccurlyeq$ (see \cite{bergman2015}). An equivalence relation induced by $\preccurlyeq$ is the relation:
\[
a \approx b := (a \preccurlyeq b) \wedge (b \preccurlyeq a)
\]

One can define a preorder $\preccurlyeq$ on a set $X$ via an equivalence relation $\approx$ by defining a partial order $\leq$ (reflexive, transitive, and antisymmetric binary relation) on the set of equivalence classes $X/\approx$ (see \cite{bergman2015}) as follows:
\[
    a \preccurlyeq b \iff [a] \leq [b]
\]

Now, we define an $R$-universal object with respect to a preorder $\preccurlyeq$.

\begin{definition}\label{universal2}
We say that an object $u$ is $R$-universal w.r.t. a preorder $\preccurlyeq$ on a class $\mathcal{C}$ if for a binary relation $R$ on $\mathcal{C}$,
\[
    \forall x : R(x, u) \wedge \forall v : \left[\forall x : R(x,v)\right] \implies v \approx u
\]
where the relation $\approx$ is the equivalence relation induced by the preorder $\preccurlyeq$.
\end{definition}

We could obtain many different special cases of definition ~\ref{universal2} according to the required application. One special case that stands out is in the following remark.

\begin{remark}\label{universal2-ump}
A special case of definition ~\ref{universal2} could be obtained by setting, for any other relation $Q$:
\[
    R(a, b) := Q(a, b) \implies a \preccurlyeq b
\]
or dually,
\[
    R(a, b) := Q(a, b) \implies a \succcurlyeq b
\]

So, in this case, an object $u$ is $R$-universal w.r.t. a preorder $\preccurlyeq$ on a class $\mathcal{C}$ if:
\begin{equation}
\left[ \forall x : Q(x, u) \implies x \preccurlyeq u \right] \wedge \forall v : \left[\forall x : Q(x,v) \implies x \preccurlyeq v \right] \implies v \approx u
\end{equation}
or dually, 
\begin{equation}
    \left[ \forall x : Q(x, u) \implies x \succcurlyeq u \right] \wedge \forall v : \left[\forall x : Q(x,v) \implies x \succcurlyeq v \right] \implies v \approx u
\end{equation}
\end{remark}

The special form in remark ~\ref{universal2-ump} of definition ~\ref{universal2} is closer to the form of a UMP. For example, if we consider the UMP of a product in a category, $u$ is the product $A \times B$ of the objects $A$ and $B$ together with the arrows $\pi_A : A \times B \rightarrow A$ and $\pi_B : A \times B \rightarrow B$. The relation $Q$ is the relation that any structure of the form
\begin{tikzcd}
	A & X \arrow[r, "p_B"] \arrow[l, "p_A"'] & B
\end{tikzcd}
has with the structure
\begin{tikzcd}
	A & A \times B \arrow[r, "\pi_B"] \arrow[l, "\pi_A"'] & B
\end{tikzcd}
, that is, $p_A$ and $p_B$ share the same codomains as $\pi_A$ and $\pi_B$ respectively. The preorder $\preccurlyeq$ between the structures is the existence of a unique arrow $u:X \dashrightarrow A \times B$ that satisfies $\pi_A \circ u = p_A$ and $\pi_B \circ u = p_B$.

\section{Properties instead of Relations}
One can argue that a relation $R$ might implicitly encode a property or a quality, in the general sense; however, to make it explicit, we first have to discuss how relations might arise from properties.

A simple possible way of defining a relation $R$ from a property $P$ is by making two things related to each other if they have the same property:
\[
    R(a, b) := P(a) \wedge P(b)
\]

where $P(\_)$ is the predicate corresponding to the property $P$. For example, in the previous discussion of the UMP of the product $A \times B$ of two objects $A$ and $B$, the structure of the object $A \times B$ together with the arrows $\pi_A$ and $\pi_B$ has the property that the arrow $\pi_A$ has the object $A \times B$ as its domain and has the object $A$ as its codomain and the arrow $\pi_B$ has the object $A \times B$ as its domain and has the object $B$ as its codomain. If we denote the structure
\begin{tikzcd}
	A & A \times B \arrow[r, "\pi_B"] \arrow[l, "\pi_A"'] & B
\end{tikzcd}
briefly as a tuple the $(A \times B, \pi_A, \pi_B)$, then we can define a predicate $P(Y, f, g)$ as:
\[
    \text{dom}(f) = Y \wedge \text{dom}(g) = X \wedge \text{cod}(f) = A \wedge \text{cod}(g) = B
\]

where $\text{dom}(f)$ denotes the domain of $f$ and $\text{cod}(f)$ denotes the codomain of $f$. Hence, the relation $Q$ in the discussion after remark ~\ref{universal2-ump} of the product $A \times B$ could be written as:
\[
    Q((X, p_A, p_B), (A \times B, \pi_A, \pi_B)) := P(X, p_A, p_B) \wedge P(A \times B, \pi_A, \pi_B)
\]

Another way to define a relation, is by making two things related to each other if one has the property and the other doesn't:
\[
    R(a, b) := P(a) \wedge \neg P(b)
\]

In general, a relation $R(a, b)$ defined through a property $P$ is a statement $\phi$ about $P(a)$ and $P(b)$:
\[
    R(a, b) := \phi(P(a), P(b))
\]
which brings us to the next definition.

\begin{definition}\label{universal3}
    We say that an object $u$ is $P$-universal w.r.t. a preorder $\preccurlyeq$ on a class $\mathcal{C}$ if for a predicate $P$ on $\mathcal{C}$ and a statement $\phi$:
    \[
        P(u) \wedge \forall x : \phi(P(x), P(u)) \wedge \forall v : \left[\forall x : \phi(P(x), P(v))\right] \implies v \approx u
    \]
    where the relation $\approx$ is the equivalence relation induced by the preorder $\preccurlyeq$.
\end{definition}

Usually, a UMP characterizes an object uniquely, in some sense, by a property it shares with other objects. Hence, the following special case of definition ~\ref{universal3}.

\begin{remark}\label{universal3-ump}
    A special case of definition ~\ref{universal3} could be obtained by setting:
    \[
        \phi(P(a), P(b)) = P(a) \wedge P(b)
    \]

    So, in this case, an object $u$ is $P$-universal w.r.t. a preorder $\preccurlyeq$ on a class $\mathcal{C}$ if:
    \begin{equation}
    P(u) \wedge \left[ \forall x : P(x) \wedge P(u) \implies x \preccurlyeq u \right] \wedge \forall v : \left[\forall x : P(x) \wedge P(v) \implies x \preccurlyeq v \right] \implies v \approx u
    \end{equation}
    or dually, 
    \begin{equation}
        P(u) \wedge \left[ \forall x : P(x) \wedge P(u) \implies x \succcurlyeq u \right] \wedge \forall v : \left[\forall x : P(x) \wedge P(v) \implies x \succcurlyeq v \right] \implies v \approx u
    \end{equation}
\end{remark}

Now we will see that we can reduce the special form in remark ~\ref{universal3-ump} of definition ~\ref{universal3} to a more compact form as stated in the following proposition.

\begin{proposition}\label{universal3-ump-prop}
    Assume that $\phi(P(a), P(b)) = P(a) \wedge P(b)$. An object $u$ is $P$-universal w.r.t. a preorder $\preccurlyeq$ on a class $\mathcal{C}$ iff
    \[
        P(u) \wedge \forall x : P(x) \implies x \preccurlyeq u
    \]
    
    or dually,
    \[
        P(u) \wedge \forall x : P(x) \implies x \succcurlyeq u
    \]
\end{proposition}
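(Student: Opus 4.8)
The plan is to prove the equivalence by writing out the condition of remark~\ref{universal3-ump} as its three conjuncts — namely $P(u)$; the clause $\forall x : (P(x) \wedge P(u) \implies x \preccurlyeq u)$; and the uniqueness clause $\forall v : [\forall x : (P(x) \wedge P(v) \implies x \preccurlyeq v)] \implies v \approx u$ — and showing that together these are interderivable with the single compact condition $P(u) \wedge \forall x : (P(x) \implies x \preccurlyeq u)$. Once the quantifiers are instantiated, each implication is elementary implication-chasing that exploits the standing fact $P(u)$.

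For the forward direction I would argue: if $u$ is $P$-universal then $P(u)$ holds, and since $P(u)$ is true the antecedent $P(x) \wedge P(u)$ is equivalent to $P(x)$, so the second conjunct $\forall x : (P(x) \wedge P(u) \implies x \preccurlyeq u)$ is literally the statement $\forall x : (P(x) \implies x \preccurlyeq u)$. Conjoined with $P(u)$ this gives the compact condition; note that neither the uniqueness clause nor the preorder axioms are needed for this half.

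For the backward direction, assume $P(u) \wedge \forall x : (P(x) \implies x \preccurlyeq u)$ and recover the three conjuncts. The clause $P(u)$ is given; the clause $\forall x : (P(x) \wedge P(u) \implies x \preccurlyeq u)$ follows by strengthening the antecedent. For the uniqueness clause, take a competitor $v$ satisfying $P(v)$ together with $\forall x : (P(x) \wedge P(v) \implies x \preccurlyeq v)$: instantiating the standing hypothesis at $x := v$ (using $P(v)$) gives $v \preccurlyeq u$, and instantiating $v$'s own condition at $x := u$ (using $P(u) \wedge P(v)$) gives $u \preccurlyeq v$, whence $v \approx u$ by the definition of the equivalence induced by $\preccurlyeq$. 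This is the classical ``any two universal objects are equivalent'' computation, run so as to produce the uniqueness clause rather than to consume it. The dual statement then follows verbatim, replacing $\preccurlyeq$ by the opposite relation $\succcurlyeq$, which is again a preorder and induces the same equivalence $\approx$.

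I expect the only subtle point to be the uniqueness step of the backward direction, where the argument genuinely uses that the competitor $v$ satisfies $P(v)$ — so that $v$ counts among the objects bounded above by $u$ — and that $u$ satisfies $P(u)$ — so that $u$ counts among the objects $x$ with $P(x) \wedge P(v)$, hence bounded above by $v$. Dropping the hypothesis $P(v)$ on the competitor would invalidate the equivalence, since an object failing $P$ would satisfy the inner clause vacuously and would then be forced to be equivalent to $u$; so this is precisely where the shape of remark~\ref{universal3-ump} does its work.
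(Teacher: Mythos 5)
Your proof is correct and follows essentially the same route as the paper's: the forward direction discards the conjunct $P(u)$ from the antecedent because it is a standing truth, and the backward direction recovers the uniqueness clause by instantiating the compact bound at $x := v$ and the competitor's bound at $x := u$. The one place you go beyond the paper is in flagging that the competitor $v$ must itself satisfy $P(v)$ for the argument to close; the paper's own proof quietly records only the conditional conclusion $P(u) \wedge P(v) \implies u \approx v$ and leaves the vacuous-competitor issue unaddressed, so your explicit discussion of that hypothesis is an improvement in precision rather than a divergence in method.
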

\begin{proof}
    First assume that $u$ is $P$-universal w.r.t. a preorder $\preccurlyeq$ on a class $\mathcal{C}$, then that implies:
    \[
        P(u) \wedge \forall x : P(x) \wedge P(u) \implies x \preccurlyeq u
    \]
    
    Since $P(u)$ is independent of $x$ and always true, we conclude that
    \[
        P(u) \wedge \forall x : P(x) \implies x \preccurlyeq u
    \]

    A similar argument could be carried out for the dual case. Conversely, assume that the necessary condition holds. Since $P(u)$ is true, then the statement $P(x) \implies x \preccurlyeq u$ is equivalent to $P(x) \wedge P(u) \implies x \preccurlyeq u$, and thus
    \begin{equation}\label{universal3-ump-prop-necessary-condition}
        \forall x: P(x) \wedge P(u) \implies x \preccurlyeq u
    \end{equation}
    
    Assume that there is another object $v$ such that 
    $\forall x : P(x) \wedge P(v) \implies x \preccurlyeq v$, if we substitute $u$ for $x$, then
    \[
        P(u) \wedge P(v) \implies u \preccurlyeq v
    \]
    Moreover, substituting $v$ for $x$ in ~\ref{universal3-ump-prop-necessary-condition}, we obtain that
    \[
        P(v) \wedge P(u) \implies v \preccurlyeq u
    \]
    Therefore, $P(u) \wedge P(v) \implies u \approx v$. Similarly for the dual case.
\end{proof}

The necessary condition of proposition ~\ref{universal3-ump-prop} could be considered as a generalization of an optimization problem where $P(x)$ means that $x$ is a feasible solution and $u$ is the optimal solution (minimal if $x \succcurlyeq u$ and maximal if $x \preccurlyeq u$).

\section{Discussion}
UMPs are abundant in Mathematics and offer an alternative approach to define a mathematical object by its relations to other objects. These relations are structural in the sense that they pertain to the way the arrows and objects are arranged in a diagram and how different compositions of arrows are equal to one another. However, UMPs could be understood from a different perspective and extended to cover many other mathematical objects that are not necessarily defined by arrows or maps. We provided such definition and several special cases of it.

\bibliographystyle{unsrtnat}

\begin{thebibliography}{1}
    \bibitem{bergman2015}
    Bergman, G. M. (2015).
    \newblock An Invitation to General Algebra and Universal Constructions.
    \newblock Springer International Publishing Switzerland.
    \newblock Universitext.
    
	\bibitem{awodey2010}
	Awodey, S (2010).
	\newblock Category Theory, Second Edition. 
	\newblock Oxford University Press. 
	\newblock Oxford Logic Guides, 52.

	\bibitem{jech2002} 
	Jech, T. (2002).
	\newblock Set Theory The Third Millennium Edition, revised and expanded.
	\newblock Springer-Verlag Berlin Heidelberg.
	\newblock Springer Monographs in Mathematics.
	
    \bibitem{borceux1994}
    Borceux, Francis (1994).
    \newblock Handbook of Categorical Algebra 1, Basic Category Theory.
    \newblock Cambridge University Press.
    \newblock Encyclopedia of Mathematics and its Applications.

    \bibitem{maclane1971}
    Mac Lane, S. (1971).
    \newblock Categories for the working mathematician, Second Edition. 
    \newblock Springer-Verlag New York. 
    \newblock Graduate Texts in Mathematics, 5.
\end{thebibliography}

\end{document}